\documentclass[11pt]{article}
\usepackage{amsmath, amssymb}
\usepackage{mathtools}
\usepackage{algorithm}
\usepackage{algpseudocode}
\usepackage{fmtcount}
\usepackage{graphicx}
\usepackage{enumitem} 
\usepackage{subcaption}
\usepackage{afterpage}
\usepackage[noadjust]{cite}
\usepackage[font=small,labelfont=bf]{caption}
\usepackage[labelfont=bf]{caption}
\usepackage{setspace}
\allowdisplaybreaks
\usepackage[title,titletoc]{appendix}
\usepackage{epsfig}
\usepackage{graphicx}
\usepackage{hhline}
\usepackage{latexsym}
\usepackage{amssymb}
\usepackage{amsmath,amscd}
\usepackage{multirow} 
\usepackage{array}
\usepackage{caption}
\usepackage{subcaption}
\usepackage{authblk}
\usepackage{color}
\usepackage{natbib}
\usepackage{rotating}
\usepackage{graphicx,lscape}
\usepackage{amsmath, amsthm, amssymb}
\usepackage{graphicx}
\usepackage{epsfig}
\usepackage{graphics,color, graphicx}
\usepackage{latexsym}
\usepackage{fullpage}
\usepackage{booktabs,fixltx2e}
\usepackage[flushleft]{threeparttable}
\usepackage{amssymb,amsmath,amscd}
\usepackage{fancyvrb}
\usepackage{pdfpages}
\usepackage{fmtcount}
\usepackage{url}
\usepackage[margin=1in]{geometry}

\theoremstyle{plain}
\newtheorem{proposition}{Proposition}[section]

\theoremstyle{definition}

\newtheorem{assumption}{Assumption}[section]
\usepackage{setspace}

\title{\bf Asymptotic theory for multiple samples with flexible random membership
	\medskip
}

\author[1,2]{Ha-Young Shin}
\affil[1]{
	Department of Statistics and Actuarial Science, Soongsil University, 369 Sangdo-ro, Dongjak-gu 06978, Seoul, Korea
}
\affil[2]{
	Integrative Institute of Basic Sciences, Soongsil University, 369 Sangdo-ro, Dongjak-gu 06978, Seoul, Korea
}
{
    \makeatletter
    \renewcommand\AB@affilsepx{: \protect\Affilfont}
    \makeatother

    \affil[ ]{Email}

    \makeatletter
    \renewcommand\AB@affilsepx{, \protect\Affilfont}
    \makeatother

    \affil[1,2]{hayoung.shin@gmail.com}
}
\date{}

\begin{document}
	\maketitle
	
	\begin{abstract}

		\noindent
		A statistic can be a function of multiple samples. There is little existing work on asymptotic theory for such statistics when group membership is neither fixed nor iid. We propose a flexible framework that can handle different kinds of deterministic and random membership, and mixtures of both. We prove some asymptotic properties and apply the framework to the stratified sampling context.
		\vspace{\baselineskip}
		
		\noindent
		\textbf{Keywords}: Asymptotic theory; Multiple samples; Stratified sampling. 
		
	\end{abstract}
	
	\pagenumbering{arabic}
	
	\section{Introduction} \label{intro}

    We are often faced with statistics based on multiple samples, such as pooled variances or two-sample test statistics. Most existing asymptotic theory with multiple samples mostly treats group membership as deterministic e.g. \cite{Vardi1985, Rublik2007, Aoshima2015, Vaillancourt1995}. Despite the existence of techniques like post-stratification, it is surprisingly difficult to find asymptotic work for multiple samples with non-iid random membership. Random membership arises naturally. Consider these scenarios for a study on a demographically stratified population:
\begin{enumerate}
    \item Researchers determine beforehand how many people to sample from each of the demographic groups.
    \item Researchers take an iid sample from the population, which is then stratified into the demographic groups.
    \item A certain demographic group is known to be reluctant to participate in the study, for various sociological reasons. Thus the researchers provide extra incentives for members of this demographic group to volunteer.
    \item Researchers want to ensure that the sample size for each demographic group exceeds some minimum threshold.
\end{enumerate}
    
    We propose a flexible framework for asymptotic theory with multiple samples in which membership is directly modeled as a random variable. This allows for random, deterministic, and partially deterministic group membership, covering all of the above scenarios and many more. %Section \ref{framework} describes the framework and proves some basic properties. Section \ref{stratified} shows how it can be applied to a stratified sampling setting.
    All proofs are in Appendix \ref{proofs}.

\section{Framework} \label{framework}

All random elements in this paper are defined on the probability space $(\Omega,\mathcal{F},P)$, unless stated otherwise; for notational simplicity, we will suppress the mention of $\omega\in\Omega$ for all maps and sets unless needed. $I\{\cdot\}$ denotes an indicator function. The measurable space $M$ has $\sigma$-algebra $\mathcal{B}$. The spaces $\Xi:=\{1,\ldots,\xi\}$ ($\xi\in\mathbb{Z}^+$) and $\{0\}\cup\mathbb{Z}^+$ are equipped with their respective discrete $\sigma$-algebras; $\xi$ is the fixed number of subpopulations, called groups, in our population. Cartesian products of measurable spaces are equipped with their induced product $\sigma$-algebras. 

We introduce the following notation: For random elements $Y,Y'\in M$ and $S,S'\in\Xi$, we write $Y\mid S\overset{d}{=}Y'\mid S'$ if $Y\mid(S=s)$ and $Y'\mid(S'=s)$ are identically distributed for all $s$ in the supports of both $S$ and $S'$; that is, if $P(Y\leq y|S=s) = P(Y'\leq y|S'=s)$ for each $s\in\Xi$ such that
$P(S=s)P(S'=s)>0$. This relationship is not transitive in general, since the supports of both $S$ and $S'$ are involved.

\sloppy Consider a random element $(Y,S)\in M\times\Xi$, and a collection of random elements $\{(Y_i,S_i)\}_{i\in\mathbb{Z}^+}\subset M\times\Xi$. Define  $N^s_n:=\sum_{i=1}^{n}I\{S_i=s\}$ for all $n\in\mathbb{Z}^+,s\in\Xi$. Here, the $Y$ and $Y_i$ terms represent the variable of interest, while the $S$ and $S_i$ terms represent group membership; $N_n^s$ is the size of the $s$th sample, while $n$ is the combined size of all samples.

\begin{assumption}\label{assumption}
(a) $\{(Y_i,S_i)\}_{i\in\mathbb{Z}^+}$ are independent, (b) $Y_i\mid S_i\overset{d}{=}Y\mid S$ for all $i\in\mathbb{Z}^+$, (c) $\lim_{n\rightarrow\infty}N^s_n=\infty$ a.s. for all $s\in\Xi$, and (d) $P(S=s)>0$ for all $s\in\Xi$.
\end{assumption}

The above is assumed throughout the paper. $(Y,S)$ represents a random draw from our population of interest. If $\{(Y_i,S_i)\}_{i\in\mathbb{Z}^+}$ is iid from this population and Assumption \ref{assumption}(d) holds, then so do (a), (b) and (c); this corresponds to scenario 2 from the introduction. However, $\{(Y_i,S_i)\}_{i\in\mathbb{Z}^+}$ need not be iid, as (b) requires only units from the same group be identically distributed. Thus the $S_i$ variables may be deterministic, corresponding to scenario 1, or random but not iid, as in scenario 3, or a mixture of deterministic and random, as in scenario 4.

%Consider again the three scenarios in the introduction. It is clear how this framework covers scenario 3. Scenario 2 is covered by letting $n=t$, even while the $S_i$'s are random. Secnario 1 is covered by letting $n=t$, and the $S_i$'s follow a pre-determined pattern e.g. if we want the number of men and women in our sample to be equal, we could let $S_i=$male for odd $i$ and female for even $i$.

For any $s\in\Xi$ and $j\in\mathbb{Z}^+$, define $\Omega^s=\{\lim_{n\rightarrow\infty}N^s_n=\infty\}$ and the stopping times
    \begin{equation*}
    K_j^s=\begin{cases}
        \arg\min_{k\in\mathbb{Z}^+}\{\sum_{i=1}^kI\{S_i=s\}=j\} &~~\mbox{on $\Omega^s$} \\
        1 &~~\mbox{on $(\Omega^s)^c$,}
    \end{cases}
    \end{equation*}
    so that on $\Omega^s$, $K_j^s$ is the $j$th positive integer for which $S_i=s$. Then $Y_{(j)}^s:=Y_{K_j^s}$ and $S_{(j)}^s:=S_{K_j^s}$ are the variable of interest and group membership for the $j$th sampled unit from group $s$.
    
    \begin{proposition} \label{iid}
    (a) For all $s\in\Xi$ and $j\in\mathbb{Z}^+$, $K_j^s$ is measurable, as are $Y_{(j)}^s$ and $S_{(j)}^s$.
    
    (b) For all $s\in\Xi$ and $n\in\mathbb{Z}^+$, $\{N_n^s,Y^s_{(1)},Y^s_{(2)},\ldots\}$ are independent and satisfy $Y_{(j)}^s\overset{d}=Y \mid (S=s)$ for all $j\in\mathbb{Z}^+$.
        %The random elements $y_{(j)}^s \mid \Omega^s$ and $y_0 \mid (s_0=s)$ are identically distributed, while $y_{(1)}^s \mid \Omega^s,y_{(2)}^s \mid \Omega^s,\ldots$ are independent. 

    (c) For all $m_1,\ldots,m_\xi\in\mathbb{Z}^+$, the vectors $\{(Y^s_{(1)},\ldots,Y^s_{(m_s)})\}_{s\in\Xi}$ are independent.
    \end{proposition}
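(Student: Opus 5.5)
The plan is to reduce everything to finite-dimensional events about $(S_1,\ldots,S_L)$ for a suitable finite $L$. On such events, Assumption~\ref{assumption}(a),(b) give explicit product formulas. First I record that $P(\Omega^s)=1$ for every $s$ by Assumption~\ref{assumption}(c), so $\Omega^*:=\bigcap_{s\in\Xi}\Omega^s$ also has probability one. Everything on $(\Omega^*)^c$ is therefore a null set and can be ignored in distributional statements; for measurability it is handled by the explicit definition $K_j^s=1$ there.

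For (a), $\Omega^s=\bigcap_{m}\bigcup_{n}\{N_n^s\ge m\}$ is measurable, since each $N_n^s$ is a finite sum of indicators of measurable events. For $k\ge 2$,
\begin{equation*}
\{K_j^s=k\}=\Omega^s\cap\{N_k^s=j,\ S_k=s\},
\end{equation*}
and $\{K_j^s=1\}=(\Omega^s\cap\{N_1^s=j,S_1=s\})\cup(\Omega^s)^c$. Both are measurable, so $K_j^s$ is a measurable $\mathbb{Z}^+$-valued map. Then $\{Y_{(j)}^s\in B\}=\bigcup_k\{K_j^s=k\}\cap\{Y_k\in B\}$ is a countable union of measurable sets, and the same argument applies to $S_{(j)}^s$.

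For (b), it suffices to show the following for every $m\ge 0$, distinct $j_1<\cdots<j_r$ and $B_1,\ldots,B_r\in\mathcal{B}$:
\begin{equation*}
P\Bigl(N_n^s=m,\ \textstyle\bigcap_{t}\{Y^s_{(j_t)}\in B_t\}\Bigr)=P(N_n^s=m)\prod_{t=1}^r P(Y\in B_t\mid S=s).
\end{equation*}
Taking $m$ summed out, or $r=1$, then yields both the marginal law $Y^s_{(j)}\overset{d}{=}Y\mid(S=s)$ and the joint independence. I partition, up to a null set, according to the values $K^s_{j_t}=k_t$ with $k_1<\cdots<k_r$. Set $L=\max(k_r,n)$. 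The event $\{N_n^s=m\}\cap\bigcap_t\{K^s_{j_t}=k_t\}$ then coincides a.s. with $\{(S_1,\ldots,S_L)\in A\}$ for a finite set $A$ of patterns $\sigma\in\Xi^L$, each having $\sigma_{k_t}=s$. For a fixed pattern $\sigma$, Assumption~\ref{assumption}(a) factorizes the probability of $\{(S_1,\ldots,S_L)=\sigma\}\cap\bigcap_t\{Y_{k_t}\in B_t\}$ over $i\le L$. For $i=k_t$, Assumption~\ref{assumption}(b) together with $P(S_{k_t}=s)>0$ (it is positive whenever the pattern has positive probability) and Assumption~\ref{assumption}(d) gives
\begin{equation*}
P(S_{k_t}=s,\ Y_{k_t}\in B_t)=P(S_{k_t}=s)\,P(Y\in B_t\mid S=s).
\end{equation*}
Hence the probability equals $P((S_1,\ldots,S_L)=\sigma)\prod_t P(Y\in B_t\mid S=s)$. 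Summing over $\sigma\in A$ and then over all $(k_t)$ gives the display, because the events partition $\{N_n^s=m\}$ up to a null set.

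For (c), I repeat the same computation with all groups at once. I condition on $K^s_{j}=k^s_j$ for all $s\in\Xi$ and $j\le m_s$; these indices are automatically distinct across groups, since $S_k$ takes one value. Factorization over $i$ then gives $\prod_s\prod_{j\le m_s}P(Y\in B^s_j\mid S=s)$, which is the product of the marginals obtained in (b).

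The main obstacle is bookkeeping rather than any deep step. I must check that $\{K^s_{j_t}=k_t\}\cap\{N^s_n=m\}$ really is a.s. determined by finitely many $S_i$, which holds only on $\Omega^s$; this is where Assumption~\ref{assumption}(c) enters. I must also justify using Assumption~\ref{assumption}(b) only at indices $i$ with $P(S_i=s)>0$, which is automatic for patterns of positive probability.
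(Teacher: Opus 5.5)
Your proposal is correct and follows essentially the same route as the paper. You decompose on the values of the stopping times $K^s_j$ on the probability-one set $\Omega^s$ (respectively $\bigcap_s\Omega^s$ for (c)), reduce each resulting event to one on finitely many $S_i$, factorize over $i$ by Assumption~\ref{assumption}(a), and replace each factor $P(Y_k\in B,\,S_k=s)$ by $P(S_k=s)P(Y\in B\mid S=s)$ using (b) and (d). Your bookkeeping via patterns $\sigma\in\Xi^L$ stands in for the paper's gap counts $\Delta^s(t,t')$, and it also cleanly covers the cases $N^s_n=0$ and $\{K^s_j=1\}$ for $j\ge 2$, which the paper's write-up passes over.
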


    Proposition \ref{iid}(c) ensures that samples from different groups are independent, while (b) ensures that $\{Y_{(j)}^s\}_{j\in\mathbb{Z}^s}$ is iid from the $s$th group; even more, it ensures that the size of the $s$th sample is independent of the sample itself. This is useful in the context of Proposition \ref{conv}.

    Even if a statistic based on $Y_{(1)}^s,\ldots,Y_{(m)}^s$ converges as $m\rightarrow\infty$, does the equivalent statistic based on $Y_{(1)}^s,\ldots,Y_{(N^s_n)}^s$ converge as $n\rightarrow\infty$, now that the sample size is random?
    %Given a convergent sequence of random variables $\{X_m\}_{m\in\mathbb{Z}^+}$ and random indices $\{N_n\}_{n\in\mathbb{Z}^+}$ that converge to infinity, we might expect $\lim_{n\rightarrow\infty}X_{N_n}$ to equal $\lim_{m\rightarrow\infty}X_m$. 
    As can be seen in Proposition \ref{conv}(b), this is the case for almost sure convergence, but extra conditions are required for convergence in distribution and even in probability, the simplest being independence between the variables and indices; otherwise, more complicated conditions such as Anscombe's condition are necessary---see \cite{Anscombe1952}. Thanks to Proposition \ref{iid}(b), these are not needed for our case.
    
    Specific cases of Proposition \ref{conv} exist in the literature; see for example Theorem 2.2 in \cite{Gut2012} or Theorem 1 in \cite{Galambos1992}. However, we have been unable to find a rigorous reference at the stated level of generality, so we include this proposition, which does not depend in any way on Assumption \ref{assumption}, for completion.

    \begin{proposition}\label{conv}
        Let $\{X_m\}_{m\in\mathbb{Z}^+}\subset M$ and $\{N_n\}_{n\in\mathbb{Z}^+}\subset\mathbb{Z}^+$ be collections of random elements.

        (a) The element $X_{N_n}$ is measurable.

        Now let $M$ be a complete, separable metric space with metric $d$, and $X$ an $M$-valued random element.

        (b) If $\lim_{n\rightarrow\infty}N_n\overset{a.s.}{=}\infty$ and $\lim_{m\rightarrow\infty}X_m\overset{a.s.}{=}X$, then $\lim_{n\rightarrow\infty}X_{N_n}\overset{a.s.}{=}X$.

        (c) Suppose $N_n$ and $(X_1,\ldots,X_m)$ are independent for any $n,m\in\mathbb{Z}^+$. If $\lim_{n\rightarrow\infty}N_n\overset{p}{=}\infty$ and $\lim_{m\rightarrow\infty}X_m\overset{p}{=}X$, then $\lim_{n\rightarrow\infty}X_{N_n}\overset{p}{=}X$.

        (d) Suppose $N_n$ and $X_m$ are independent for any $n,m\in\mathbb{Z}^+$. If $\lim_{n\rightarrow\infty}N_n\overset{p}{=}\infty$ and $X_m\rightsquigarrow X$ as $m\rightarrow\infty$, then $X_{N_n}\rightsquigarrow X$ as $n\rightarrow\infty$.
    \end{proposition}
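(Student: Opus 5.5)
We prove the four parts in order. Throughout, the idea is to decompose on the events $\{N_n=k\}$, $k\in\mathbb{Z}^+$, which partition $\Omega$ into countably many measurable pieces.

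For (a), write $X_{N_n}(\omega)=\sum_{k\in\mathbb{Z}^+}X_k(\omega)I\{N_n(\omega)=k\}$, understood pointwise since exactly one indicator is nonzero. For any $B\in\mathcal{B}$,
\begin{equation*}
\{X_{N_n}\in B\}=\bigcup_{k\in\mathbb{Z}^+}\big(\{N_n=k\}\cap\{X_k\in B\}\big).
\end{equation*}
This is a countable union of sets in $\mathcal{F}$, so $X_{N_n}$ is measurable. No structure on $M$ is needed for this step.

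For (b), let $A=\{N_n\to\infty\}\cap\{X_m\to X\}$, so $P(A)=1$. Fix $\omega\in A$ and $\varepsilon>0$. Choose $m_0$ with $d(X_m(\omega),X(\omega))<\varepsilon$ for all $m\ge m_0$. Then choose $n_0$ with $N_n(\omega)\ge m_0$ for all $n\ge n_0$. It follows that $d(X_{N_n}(\omega),X(\omega))<\varepsilon$ for all $n\ge n_0$, which is a purely deterministic subsequence argument. Separability ensures $d(X_{N_n},X)$ is a random variable, so the convergence event is measurable.

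For (c), fix $\varepsilon>0$. For any $m_0$, split on $\{N_n<m_0\}$ and $\{N_n\ge m_0\}$:
\begin{equation*}
P(d(X_{N_n},X)>\varepsilon)\le P(N_n<m_0)+\sum_{k\ge m_0}P(N_n=k,\,d(X_k,X)>\varepsilon).
\end{equation*}
This is the step I expect to be the main obstacle. The event $\{d(X_k,X)>\varepsilon\}$ involves $X$, which is not assumed independent of $N_n$. The hypothesis only gives independence of $N_n$ from $(X_1,\ldots,X_m)$, so the sum cannot be factored directly.

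My first attempt avoids $X$ through a Cauchy-type argument. Convergence in probability in a complete space is equivalent to being Cauchy in probability. Choose $m_0$ so that $P(d(X_k,X_l)>\varepsilon)<\varepsilon$ for all $k,l\ge m_0$. Now compare $X_{N_n}$ with $X_{m}$ for a large fixed $m\ge m_0$. On $\{N_n=k\}$ with $m_0\le k\le m$, the event $\{d(X_k,X_m)>\varepsilon\}$ is a function of $(X_1,\ldots,X_m)$, so independence gives a factorization:
\begin{equation*}
\sum_{k=m_0}^{m}P(N_n=k)P(d(X_k,X_m)>\varepsilon)\le\varepsilon.
\end{equation*}
The tail $\{N_n>m\}$ still needs handling. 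Take $L$ large and use $(X_1,\ldots,X_L)$, splitting on $\{N_n>L\}$. The probability $P(N_n>L)$ is not small, however, so this tail cannot simply be discarded.

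A cleaner route is the subsequence characterization. Given any subsequence, extract a further subsequence $n'$ with $N_{n'}\to\infty$ a.s. Then bound $P(d(X_{N_{n'}},X)>\varepsilon)$ by conditioning on $N_{n'}=k$ and using independence with a sub-subsequence $X_{m_j}\to X$ a.s. I expect this to require care. If it resists, I would fall back to the weak-convergence argument of (d) applied to the pair $(X_m,X)$ jointly.

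For (d), let $f$ be bounded, continuous, and real-valued. Independence of $N_n$ and $X_k$ gives
\begin{equation*}
E f(X_{N_n})=\sum_k P(N_n=k)\,E f(X_k).
\end{equation*}
Since $Ef(X_k)\to Ef(X)$, for $k\ge m_0$ we have $|Ef(X_k)-Ef(X)|<\varepsilon$. Therefore
\begin{equation*}
|Ef(X_{N_n})-Ef(X)|\le 2\|f\|_\infty P(N_n<m_0)+\varepsilon,
\end{equation*}
and the first term tends to $0$ because $N_n\to\infty$ in probability. This proves $X_{N_n}\rightsquigarrow X$.
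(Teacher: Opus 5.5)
Your parts (a), (b) and (d) are correct. (a) and (b) are the paper's arguments, and your (d) with bounded continuous $f$ is a slightly more direct version of the paper's continuity-set argument. The genuine gap is in (c), where none of your three routes gets to a proof:
\begin{itemize}
\item You abandon the first attempt because of a tail problem that does not actually exist.
\item The subsequence route fails as sketched. Almost sure convergence of $X_{m_j}$ along a subsequence says nothing about $X_k$ for $k\notin\{m_j\}$, and $N_{n'}$ need not take values in $\{m_j\}$.
\item The fallback to (d) for the pair $(X_m,X)$ needs $N_n$ to be independent of $(X_m,X)$, which is exactly the obstacle you identified. It can be derived: independence extends from each $(X_1,\ldots,X_m)$ to $\sigma(X_1,X_2,\ldots)$ by a $\pi$--$\lambda$ argument, and $X$ is a.s.\ equal to a subsequential a.s.\ limit of the $X_m$. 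But you do not supply this, and it is more machinery than needed.
\end{itemize}

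The missing observation is that the hypothesis can be applied with a different block length for each $k$. You do not need one fixed vector $(X_1,\ldots,X_L)$ for all $k$. For $k\ge m_0$, the event $\{d(X_k,X_{m_0})>2\varepsilon/3\}$ lies in $\sigma(X_1,\ldots,X_k)$, which is independent of $N_n$. Hence $P(N_n=k,\,d(X_k,X_{m_0})>2\varepsilon/3)=P(N_n=k)\,P(d(X_k,X_{m_0})>2\varepsilon/3)$ for every $k\ge m_0$. Now compare $X_{N_n}$ with the fixed early term $X_{m_0}$ rather than with $X$:
\[
P(d(X_{N_n},X)>\varepsilon)\le P(d(X_{m_0},X)>\varepsilon/3)+P(N_n<m_0)+\sum_{k\ge m_0}P(N_n=k)\,P(d(X_k,X_{m_0})>2\varepsilon/3).
\]
Choose $m_0$ so that $P(d(X_k,X)>\varepsilon/3)<\eta$ for all $k\ge m_0$. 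Then $P(d(X_k,X_{m_0})>2\varepsilon/3)<2\eta$ uniformly in $k\ge m_0$, so the right side is at most $3\eta+P(N_n<m_0)$, which is below $4\eta$ for large $n$. This is the paper's proof.

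The same remark rescues your first attempt directly. For $k>m\ge m_0$, the event $\{d(X_k,X_m)>\varepsilon\}$ lies in $\sigma(X_1,\ldots,X_k)$, so the tail $\{N_n>m\}$ contributes at most $\varepsilon\,P(N_n>m)$ and need not be discarded.
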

Actually, the proof of Proposition \ref{conv}(d) holds even without separability or completeness.

    \section{Application to stratified sampling}\label{stratified}

Suppose $M$ is Euclidean with the standard Euclidean norm denoted by $\lVert\cdot\rVert$, and define an estimator for $\mu^s:=E(Y\mid S=s)$ by $\hat\mu^s_m:=(1/m)\sum_{j=1}^{m}Y_{(j)}^s$, where $m\in\mathbb{Z}^+$. Let $\hat\lambda_n^s\in[0,1]$ ($s\in \Xi$) be random variables satisfying $\sum_{s=1}^\xi\hat\lambda_n^s=1$. Then the stratified sampling mean estimator is $\hat\mu_n:=\sum_{s=1}^\xi\hat\lambda^s_n\hat\mu_{N_n^s}^s$.

\begin{proposition} \label{cons}
    Suppose $d\in\mathbb{Z}^+$, $M=\mathbb{R}^d$ and $E(\lVert Y\rVert\mid S=s)<\infty$ for all $s\in\Xi$. Assume that the random variables $\hat\lambda_n^s$ satisfy $\lim_{n\rightarrow\infty}\hat\lambda_n^s=P(S=s)$ almost surely/in probability for all $s\in\Xi$. Then $\lim_{n\rightarrow\infty}\hat\mu_n=E(Y)$ by the same mode of convergence.

    \end{proposition}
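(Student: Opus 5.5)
The plan is to reduce the claim to consistency of each within-group mean and then combine the groups. Since $\Xi$ is finite and $E(Y)=\sum_{s=1}^\xi P(S=s)\mu^s$ by the law of total expectation, it suffices to show $\hat\mu^s_{N_n^s}\to\mu^s$ almost surely for each $s$. The products $\hat\lambda^s_n\hat\mu^s_{N^s_n}$ then converge to $P(S=s)\mu^s$ in the same mode as $\hat\lambda_n^s$. Finite sums of such terms converge in that mode too; in the in-probability case we use the continuous mapping theorem applied to the joint vector. Almost sure convergence of $\hat\mu^s_{N^s_n}$ implies convergence in probability, so one argument covers both modes. The relation $E(Y)=\sum_s P(S=s)\mu^s$ needs $E\lVert Y\rVert<\infty$. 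This follows from $E\lVert Y\rVert=\sum_s P(S=s)E(\lVert Y\rVert\mid S=s)<\infty$ because there are finitely many groups.

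For the within-group step, fix $s$. By Proposition \ref{iid}(b), $Y^s_{(1)},Y^s_{(2)},\ldots$ are independent and each is distributed as $Y\mid(S=s)$, which has finite first moment by hypothesis. The strong law of large numbers, applied coordinatewise in $\mathbb{R}^d$, gives $\hat\mu^s_m\to\mu^s$ almost surely as $m\to\infty$. Assumption \ref{assumption}(c) gives $N^s_n\to\infty$ almost surely. Proposition \ref{conv}(b), with $M=\mathbb{R}^d$ complete and separable, $X_m=\hat\mu^s_m$ and $N_n=N^s_n$, then yields $\hat\mu^s_{N^s_n}\to\mu^s$ almost surely. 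Measurability of $\hat\mu^s_{N^s_n}$ comes from Proposition \ref{conv}(a) together with Proposition \ref{iid}(a).

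One technicality needs care: $N^s_n$ takes values in $\{0\}\cup\mathbb{Z}^+$, while Proposition \ref{conv} indexes by $\mathbb{Z}^+$. I would handle the event $N^s_n=0$ by any fixed convention for $\hat\mu^s_0$, or by replacing $N^s_n$ with $\max(N^s_n,1)$. These agree with the original for all large $n$ almost surely, since $N^s_n\to\infty$ a.s., so the limit is unaffected.

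Combining the groups is then routine. In the almost sure case, intersect the finitely many probability-one events and use the algebra of limits. In the probability case, $(\hat\lambda^s_n,\hat\mu^s_{N^s_n})_{s\in\Xi}$ converges in probability jointly, because componentwise convergence in probability implies joint convergence in probability. Applying the continuous map $(a_s,b_s)_s\mapsto\sum_s a_sb_s$ gives the result.

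I expect the main obstacle to be the bookkeeping rather than any deep step: justifying the strong law for the reindexed sequence, which Proposition \ref{iid}(b) handles, and the $N^s_n=0$ issue. Independence between $N^s_n$ and the sample is not needed here, because the almost sure route through Proposition \ref{conv}(b) suffices.
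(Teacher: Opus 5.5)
Your proposal is correct and follows essentially the same route as the paper's proof. Both apply the strong law of large numbers to the i.i.d.\ reindexed sequence $Y^s_{(1)},Y^s_{(2)},\ldots$ from Proposition~\ref{iid}(b), pass to the random index $N^s_n$ via Proposition~\ref{conv}(b), and combine the finitely many groups using $E(Y)=\sum_s P(S=s)\mu^s$. Your version is in fact more careful than the paper's terse argument, since you handle the $N^s_n=0$ convention, the joint-convergence step in the in-probability case, and the integrability of $Y$. You are also right that the independence-based Proposition~\ref{conv}(c), which the paper additionally cites, is not needed.
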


    \begin{proposition}\label{clt}
        Suppose $d\in\mathbb{Z}^+$, $M=\mathbb{R}^d$ and $E(\lVert Y\rVert^2\mid S=s)<\infty$ for all $s\in\Xi$. Denote the covariance matrices by $\Sigma^s:=\text{Cov}(Y\mid S=s)$, for all $s\in \Xi$. Assume that $\lambda_n^s=P(S=s)+o_p(n^{-1/2})$ as $n\rightarrow\infty$, and that $\lim_{n\rightarrow\infty}N^s_n/n=f_s$ for some $f_s\in(0,1)$, for all $s\in\Xi$. 

        (a) As $n\rightarrow\infty$,
        \begin{align*}
        n^{1/2}(\hat\mu_n-E(Y))\rightsquigarrow N\bigg(0,\sum_{s=1}^\xi \frac{P(S=s)^2}{f_s}\Sigma^s\bigg).
        \end{align*}

        (b) As $n\rightarrow\infty$
        \begin{align*}
            P\bigg(n\big(\hat\mu_n-E(Y)\big)^T\Big(\sum_{s=1}^\xi\frac{n(\hat\lambda_n^s)^2}{N^s_n}\hat\Sigma^s_n\Big)^{-1}\big(\hat\mu_n-E(Y)\big)<\chi^2_{1-\alpha,d}\bigg)\rightarrow1-\alpha,
        \end{align*}
        where $\chi^2_{1-\alpha,d}$ is the $(1-\alpha)$-quantile of the $\chi^2_d$-distribution, and $\hat\Sigma^s_n:=\frac{1}{N^s_n-1}\sum_{j=1}^{N^s_n}(Y_{(j)}^s-\hat\mu^s_{N^s_n})(Y_{(j)}^s-\hat\mu^s_{N^s_n})^T$.
    \end{proposition}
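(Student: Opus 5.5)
Part (a) will follow from a per-group CLT transferred to random sample sizes, combined across groups by independence. Write $\pi_s:=P(S=s)$, so that $E(Y)=\sum_s\pi_s\mu^s$. Since $\sum_s\hat\lambda^s_n=1$, we decompose
\begin{align*}
n^{1/2}(\hat\mu_n-E(Y))=\sum_{s=1}^\xi \hat\lambda^s_n\, n^{1/2}(\hat\mu^s_{N^s_n}-\mu^s)+\sum_{s=1}^\xi n^{1/2}(\hat\lambda^s_n-\pi_s)\mu^s .
\end{align*}
The second sum is $o_p(1)$ by the assumption $\hat\lambda_n^s=\pi_s+o_p(n^{-1/2})$; I read the $\lambda_n^s$ in the statement as $\hat\lambda_n^s$. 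It then suffices, by Slutsky, to show two things: that $\hat\lambda^s_n\to\pi_s$ in probability, and that the vector $\big(n^{1/2}(\hat\mu^s_{N^s_n}-\mu^s)\big)_{s\in\Xi}$ converges jointly to independent $N(0,\Sigma^s/f_s)$ limits.

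For a fixed group $s$, Proposition \ref{iid}(b) says $Y^s_{(1)},Y^s_{(2)},\ldots$ are iid with law $Y\mid S=s$. The multivariate CLT therefore gives $Z^s_m:=m^{1/2}(\hat\mu^s_m-\mu^s)\rightsquigarrow N(0,\Sigma^s)$. Proposition \ref{iid}(b) also gives independence of $N^s_n$ from $(Y^s_{(j)})_j$, hence from each $Z^s_m$. Since $N_n^s/n\to f_s>0$ forces $N^s_n\to\infty$ in probability, Proposition \ref{conv}(d) yields $Z^s_{N^s_n}\rightsquigarrow N(0,\Sigma^s)$. Next, $n^{1/2}(\hat\mu^s_{N^s_n}-\mu^s)=(n/N^s_n)^{1/2}Z^s_{N^s_n}$, and Slutsky with $n/N^s_n\to 1/f_s$ gives the marginal limit $N(0,\Sigma^s/f_s)$. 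Note that I interpret the limit of $N_n^s/n$ as holding at least in probability.

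The main obstacle is the \emph{joint} convergence across $s$. Proposition \ref{iid}(c) gives independence of the group samples only for deterministic lengths $m_s$, while the $N^s_n$ are random and dependent across groups, since they sum to $n$. I would try to avoid random indices at this step. Set $m_s(n):=\lceil f_s n\rceil$ and show
\begin{align*}
n^{1/2}\big(\hat\mu^s_{N^s_n}-\hat\mu^s_{m_s(n)}\big)=o_p(1).
\end{align*}
This is an Anscombe-type statement. It holds because $|N^s_n-m_s(n)|=o_p(n)$, and Kolmogorov's maximal inequality bounds $\max_{|k-m|\le\delta m}\big|\sum_{j\le k}-\sum_{j\le m}\big|$ by $O_p((\delta m)^{1/2})$. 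Letting $\delta\downarrow0$ completes that step. The vectors $\hat\mu^s_{m_s(n)}$ with deterministic lengths are independent across $s$ by Proposition \ref{iid}(c), so their joint limit is the product of the marginals. Joint convergence of the original vector then follows by Slutsky. Finally, $\hat\lambda^s_n\to\pi_s$ in probability, so the continuous mapping theorem gives the limit $N\big(0,\sum_s\pi_s^2\Sigma^s/f_s\big)$.

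For (b), the plan is to show the sandwich estimator is consistent and then apply continuous mapping. First, $\hat\Sigma^s_m\to\Sigma^s$ a.s. as $m\to\infty$ by the SLLN applied to $Y^s_{(j)}Y^{s\,T}_{(j)}$ and $Y^s_{(j)}$. Proposition \ref{conv}(b) then transfers this to $\hat\Sigma^s_{N^s_n}$, which requires $N^s_n\to\infty$ a.s., given by Assumption \ref{assumption}(c). Combining with $n/N^s_n\to1/f_s$ and $\hat\lambda^s_n\to\pi_s$, the middle matrix converges in probability to $V:=\sum_s\pi_s^2\Sigma^s/f_s$. I assume $V$ is nonsingular, which holds for instance if some $\Sigma^s$ is positive definite; this implicit requirement should be stated. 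Since $V$ is nonsingular, matrix inversion is continuous at $V$. Writing $W_n:=n^{1/2}(\hat\mu_n-E(Y))$, Slutsky and part (a) give $W_n^T\hat V_n^{-1}W_n\rightsquigarrow W^TV^{-1}W\sim\chi^2_d$, where $W\sim N(0,V)$ and $\hat V_n$ denotes the middle matrix. Because the $\chi^2_d$ distribution is continuous at $\chi^2_{1-\alpha,d}$, the probability converges to $1-\alpha$.
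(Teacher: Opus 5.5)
Your proof is correct and has the same architecture as the paper's. You use the same split into $\sum_s\hat\lambda^s_n n^{1/2}(\hat\mu^s_{N^s_n}-\mu^s)$ plus an $o_p(1)$ centering term. You replace each random length $N^s_n$ by a deterministic length $\approx nf_s$, so that Proposition~\ref{iid}(c) gives cross-group independence. You finish with Slutsky to return to the random lengths.

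The genuine difference is the lemma behind that replacement. The paper uses the independence of $N^s_n$ and $(Y^s_{(j)})_j$ from Proposition~\ref{iid}(b) to compute exactly
$E\lVert X_{N^s_n,n}\rVert^2=\sum_m P(N^s_n=m)\big(2-2\sqrt{a(m,n)/b(m,n)}\big)\mathrm{tr}\,\Sigma^s$,
and then shows this tends to zero. You instead verify Anscombe's condition through Kolmogorov's maximal inequality. When writing this out, also treat the normalization term $n^{1/2}T_m(1/N^s_n-1/m)$, where $T_m:=\sum_{j\le m}(Y^s_{(j)}-\mu^s)$; it is $o_p(1)$ because $T_m/m^{1/2}=O_p(1)$ and $N^s_n/m\to1$ in probability.

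What each route buys:
\begin{itemize}
\item The paper's route is more elementary. It is also exactly what its introduction advertises: Anscombe-type conditions become unnecessary thanks to Proposition~\ref{iid}(b).
\item Your route never uses independence between $N^s_n$ and the sample. It needs only that $(Y^s_{(j)})_j$ is iid and that $N^s_n/n\to f_s$ in probability. It would therefore extend to designs where sample sizes depend on observed responses, provided each within-group sequence stays iid.
\item The same contrast appears in (b). Your SLLN plus Proposition~\ref{conv}(b) argument avoids the independence hypothesis that the paper's appeal to Proposition~\ref{conv}(c) requires.
\end{itemize}

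Two smaller remarks. Your observation that $\sum_s P(S=s)^2\Sigma^s/f_s$ must be nonsingular is correct; it is an implicit hypothesis the paper never states. Your separate marginal step via Proposition~\ref{conv}(d) becomes redundant once the deterministic-length reduction is in place.
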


    Proposition \ref{clt}(a) is not actually obvious since $\{N^s_n\}_{s\in\Xi}$, and therefore $\{\hat\mu_{N_n^s}^s\}_{s\in\Xi}$, may not be independent.

    The proportion estimator $N_n^s/n$ would not work as $\hat\lambda_n^s$ in Proposition \ref{clt}. Here we provide two estimators that do:
    \begin{enumerate}
        \item $\hat\lambda_n^s=P(S=s)$, since $P(S=s) = P(S = s) + o_p(n^{-1/2})$ is trivially true. This is commonly done in stratified sampling, where the strata are weighted by population proportions that are presumed to be known beforehand.
        \item Let $\hat\lambda_n^s$ be the ordinary empirical proportion estimator of $P(S=s)$ based on an \textit{auxiliary} iid sample of size $m_n$, with $m_n/n\rightarrow\infty$, which only contains information about $S$ and not $Y$. Then $\hat\lambda_n^s-P(S=s)=O_p(m_n^{-1/2})=o_p(n^{-1/2})$. This requires no assumption of independence between the two samples. This kind of set up is common, as often information on $S$ (e.g. demographics) is much easier and cheaper to collect than information on $Y$.
    \end{enumerate}

    \section{Discussion}

    We have proposed a framework for asymptotic theory with group membership modeled as random variables. This provides greater flexibility by allowing for different kinds of deterministic and random membership, as well as mixtures of both. An important note is that although there are some surface-level similarities between finite mixture models and our framework in that the marginal distribution of $Y_i$ is expressible as a finite mixture, we go far beyond classical finite mixture models by dropping the iid assumption. Additionally, in finite mixture modeling, the $S_i$'s are unobserved and the goal is to infer their distribution from the $Y_i$'s; this is completely different from the goals of the current paper.

    We hope that our results are useful for others trying to prove asymptotic properties for statistics based on multiple samples with greater generality.

    \section*{Acknowledgement}

    This work was supported by the Soongsil University Research Fund (New Faculty Research Support, 2025).

    \appendix
    \section{Proofs}\label{proofs}

    \begin{proof}[Proof of Proposition \ref{iid}]
        (a) Fix $s\in\Xi$. For $K_j^s$, it suffices to show that $\{K_j^s=t\}$ is measurable for each $t\in\mathbb{Z}^+$. Noting that $\{K_j^s=t\}=\{\sum_{i=1}^{t-1}I\{S_i=s\}=j-1\}\cap\{S_t=s\}\cap\Omega^s\in\mathcal{F}$ for $t>1$ and $\{K_1^s=1\}=\{S_1=s\}\cup(\Omega^s)^c\in\mathcal{F}$ completes the proof. Then for all $t\in\mathbb{Z}^+$, $Y_{(j)}^s=Y_t$ on $\{K_j^s=t\}$, so $Y_{(j)}^s$ is measurable since $\{K_j^s=t\}$ is measurable for all $t\in\mathbb{Z}^+$. Similarly, $S_{K_j^s}$ is measurable. This result can also be derived from basic properties of stopping times; see for example Proposition 1.4 in Chapter 2 of \cite{Ethier1986}
    
    (b) Fix $s\in\Xi$. If $P(\Omega_*)=1$ for some $\Omega_*\in\mathcal{F}$, $P(A \mid \Omega_*)=P(A)$ for any set $A\in\mathcal{F}$, and sets (and random elements) are independent if and only if they are independent given $\Omega_*$.

For $t,t'\in\{0\}\cup\mathbb{Z}^+$, let $\Delta^s(t,t')$ be the random variable defined as the number of times $S_i=s$ after $i=t$ and before $i=t'$; that is, $\Delta^s(t,t')=0$ if $t'\leq t+1$ and $\sum_{i=t+1}^{t'-1}I\{S_i=s\}$ if $t'\geq t+2$.

\sloppy Then for any $m\in\mathbb{Z}^+$, any $j_1,\ldots,j_m\in\mathbb{Z}^+$ such that $j_1<\cdots<j_m$, and any $t_1,\ldots,t_m\in\mathbb{Z}^+$ such that $t_0:=0<t_1<\cdots<t_m$,
\begin{align}\label{sets}
\Big(\bigcap_{l=1}^m\{K_{j_l}^s=t_l\}\Big)\cap\Omega^s&=\Big(\bigcap_{l=1}^m\{S_{t_l}=s,\Delta^s(t_{l-1},t_l)=j_l-j_{l-1}-1\}\Big)\cap\Omega^s,
\end{align}
and $\Delta^s(0,t_1),(Y_{t_1},S_{t_1}),\Delta^s(t_1,t_2),(Y_{t_2},S_{t_2}),\ldots,\Delta^s(t_{j_{m-1}},t_{j_m}),(Y_{t_m},S_{t_m})$ are independent by Assumption \ref{assumption}(a). 

Letting $m=1$ and $j_1=j$ in the above paragraph, by Assumption \ref{assumption}(c), (b) and (d), we get $Y_{(j)}^s\overset{d}=Y \mid (S=s)$ from the following: for any $A$ in the Borel $\sigma$-algebra,
        \begin{equation}\begin{aligned}\label{id}
            &P(Y_{(j)}^s\in A)=P(Y_{(j)}^s\in A \mid \Omega^s)=\sum_{t=1}^\infty P(Y_t\in A, K_j^s=t \mid \Omega^s) \\
            =&\sum_{t=1}^\infty P(Y_t\in A,S_t=s,\Delta^s(0,t)=j-1 \mid \Omega^s) \\
            =&\sum_{t=1}^\infty P(Y_t\in A, S_t=s\mid\Omega^s)P(\Delta^s(0,t)=j-1 \mid \Omega^s) \\
            =&\sum_{t=1}^\infty P(Y_t\in A\mid S_t=s,\Omega^s)P(S_t=s,\Delta^s(0,t)=j-1 \mid \Omega^s) \\
            %&=\sum_{t=1}^\infty P(Y_t\in A \mid S_t=s)P(S_t=s \mid \Omega^s)P(\Delta^s(0,t)=j-1 \mid \Omega^s) \\
            %%%&=\sum_{t=1}^\infty P(Y\in A \mid S=s)P(\Delta^s(0,t)=j-1,S_t=s \mid \Omega^s) \\
            =&P(Y\in A \mid S=s)\sum_{t=1}^\infty P(K_j^s=t)=P(Y\in A \mid S=s).
        \end{aligned}\end{equation}
        proving identity of distribution. For independence, let $m\in\mathbb{Z}^+$ be arbitrary and $(j_1,\ldots,j_m)=(1,\ldots,m)$. Consider two cases: $k<m$ and $k\geq m$. By Assumption \ref{assumption}(c), (b), (a) and (d), for all positive integers $k<m$ and $A_l\in\mathcal{B}$ ($l=1,\ldots,m$),
        \begin{align*}
            &P\bigg(\{N^s_n=k\}\cap\bigcap_{l=1}^m\{Y_{(l)}^s\in A_l\}\bigg)=\sum_{t_1<\cdots t_k\leq n<t_{k+1}<\cdots<t_m}P\bigg(\bigcap_{l=1}^m\{Y_{t_l}\in A_l\}\cap \bigcap_{l=1}^m\{K_{l}^s=t_l\}\,\,\bigg|\,\,\Omega^s\bigg) \\
            %=&\sum_{t_1<\cdots t_k\leq n<t_{k+1}<\cdots<t_m}P\bigg(\bigcap_{l=1}^m\{Y_{t_l}\in A_l\}\cap\bigcap_{l=1}^m\{S_{t_l}=s,\Delta^s(t_{l-1},t_l)=0\}\,\,\bigg|\,\,\Omega^s\bigg) \\
            =&\sum_{t_1<\cdots t_k\leq n<t_{k+1}<\cdots<t_m}\bigg(\prod_{l=1}^mP(Y_{t_l}\in A_l,S_{t_l}=s \mid \Omega^s)\bigg)P\bigg(\bigcap_{l=1}^m\{\Delta^s(t_{l-1},t_l)=0\}\,\,\bigg|\,\,\Omega^s\bigg) \\
            %%%=&\sum_{t_1<\cdots t_k\leq n<t_{k+1}<\cdots<t_m}\bigg(\prod_{l=1}^mP(Y_{t_l}\in A_l \mid S_{t_l}=s,\Omega^s)P(S_{t_l}=s \mid \Omega^s)\bigg)P\bigg(\bigcap-{l=1}^m\{\Delta^s(t_{l-1},t_l)=0\}\,\,\bigg|\,\,\Omega^s\bigg) \\
            %=&\sum_{t_1<\cdots t_k\leq n<t_{k+1}<\cdots<t_m}P(Y_{t_1}\in A_1 \mid S_{t_1}=s)P(S_{t_1}=s)\ldotsP(Y_{t_m}\in A_m \mid S_{t_m}=s)P(S_{t_m}=s)P(\Delta^s(0,t_1)=\ldots=\Delta^s(t_{j_{m-1}},t_{j_m})=0 \mid \Omega^s) \\
            %=&\sum_{t_1<\cdots t_k\leq n<t_{k+1}<\cdots<t_m}P(Y_{t_1}\in A_1 \mid S_{t_1}=s)\ldotsP(Y_{t_m}\in A_m \mid S_{t_m}=s)P(S_{t_1}=s \mid \Omega^s)\ldotsP(S_{t_m}=s \mid \Omega^s)P(\Delta^s(0,t_1)=\ldots=\Delta^s(t_{j_{m-1}},t_{j_m})=0 \mid \Omega^s) \\
            =&\sum_{t_1<\cdots t_k\leq n<t_{k+1}<\cdots<t_m} \bigg(\prod_{l=1}^mP(Y_{t_l}\in A_l \mid S_{t_l}=s,\Omega^s)\bigg)P\bigg(\bigcap_{l=1}^m\{S_{t_l}=s,\Delta^s(t_{l-1},t_l)=0\}\,\,\bigg|\,\,\Omega^s\bigg) \\
            =&\bigg(\prod_{l=1}^mP(Y\in A_l \mid S=s)\bigg)\sum_{t_1<\cdots t_k\leq n<t_{k+1}<\cdots<t_m}P(K_{1}^s=t_1,\ldots,K_{m}^s=t_m) \\
            =&\bigg(\prod_{l=1}^mP(Y_{(l)}\in A_l)\bigg)P(N^s_n=k).
        \end{align*}
        where the last equality follows from (\ref{id}). Similarly, if $k\geq m$,
        \begin{align*}
            &P\bigg(\{N^s_n=k\}\cap\bigcap_{l=1}^m\{Y_{(l)}^s\in A_l\}\bigg)=\sum_{t_1<\cdots<t_k\leq n<t_{k+1}}P\bigg(\bigg[\bigcap_{l=1}^m\{Y_{t_l}\in A_l\}\bigg]\cap\bigg[\bigcap_{l={1}}^{k+1}\{K_l^s=t_l\}\bigg]\,\,\bigg|\,\,\Omega^s\bigg) \\
            %=&\sum_{t_1<\cdots<t_k\leq n<t_{k+1}}P\bigg(\bigg[\bigcap_{l=1}^m\{Y_{t_l}\in A_l\}\bigg]\cap\bigg[\bigcap_{l={1}}^{k+1}\{S_{t_l}=s,\Delta^s(t_{l-1},t_l)=0\}\bigg]\,\,\bigg|\,\,\Omega^s\bigg) \\
            =&\sum_{t_1<\cdots<t_k\leq n<t_{k+1}}\bigg(\prod_{l=1}^mP(Y_{t_l}\in A_l,S_{t_l}=s \mid \Omega^s)\bigg)P\bigg(\bigcap_{l=1}^m\{\Delta^s(t_{l-1},t_l)=0\} \\
            &\quad\quad\cap\bigcap_{l={m+1}}^{k+1}\{S_{t_l}=s,\Delta^s(t_{l-1},t_l)=0\}\,\,\bigg|\,\,\Omega^s\bigg) \\
            %%%=&\sum_{t_1<\cdots<t_k\leq n<t_{k+1}}\bigg(\prod_{l=1}^mP(Y_{t_l}\in A_l \mid S_{t_l}=s,\Omega^s)P(S_{t_l}=s \mid \Omega^s)\bigg)P\bigg(\bigcap-{l=1}^m\{\Delta^s(t_{l-1},t_l)=0\}\,\,\bigg|\,\,\Omega^s\bigg) \\
            %=&\sum_{t_1<\cdots<t_k\leq n<t_{k+1}}P(Y_{t_1}\in A_1 \mid S_{t_1}=s)P(S_{t_1}=s)\ldotsP(Y_{t_m}\in A_m \mid S_{t_m}=s)P(S_{t_m}=s)P(\Delta^s(0,t_1)=\ldots=\Delta^s(t_{j_{m-1}},t_{j_m})=0 \mid \Omega^s) \\
            %=&\sum_{t_1<\cdots<t_k\leq n<t_{k+1}}P(Y_{t_1}\in A_1 \mid S_{t_1}=s)\ldotsP(Y_{t_m}\in A_m \mid S_{t_m}=s)P(S_{t_1}=s \mid \Omega^s)\ldotsP(S_{t_m}=s \mid \Omega^s)P(\Delta^s(0,t_1)=\ldots=\Delta^s(t_{j_{m-1}},t_{j_m})=0 \mid \Omega^s) \\
            =&\sum_{t_1<\cdots<t_k\leq n<t_{k+1}}\bigg(\prod_{l=1}^mP(Y_{t_l}\in A_l \mid S_{t_l}=s,\Omega^s)\bigg)P\bigg(\bigcap_{l=1}^{k+1}\{S_{t_l}=s,\Delta^s(t_{l-1},t_l)=0\}\,\,\bigg|\,\,\Omega^s\bigg)\\
            =&\Big(\prod_{l=1}^mP(Y\in A_l \mid S=s)\Big)\sum_{t_1<\cdots<t_k\leq n<t_{k+1}}P(K_{1}^s=t_1,\ldots,K_{k+1}^s=t_{k+1}) \\
            =&P(N^s_n=k)\prod_{l=1}^mP(Y\in A_l \mid S=s).
        \end{align*}

        (c) In the following,  each $A_l^s\in\mathcal{B}$ and the sums are taken over all values of $t_1^1,\ldots,t_{m_1}^1,\ldots,t_1^\xi,\ldots,t_{m_s}^\xi\in\mathbb{Z}^+$ for which $t_1^s<\ldots<t_{m_s}^s$ for each $s\in\Xi$ and $t_l^s\neq t_{l'}^{s'}$ if $(s,l)\neq(s',l')$. The last equality comes from the results in (b).
        \begin{align*}
            &P\bigg(\bigcap_{s=1}^\xi\bigcap_{l=1}^{m_s}\{Y^s_{(j)}\in A_l^s\}\bigg)=\sum P\bigg(\bigg[\bigcap_{s=1}^\xi\bigcap_{l=1}^{m_s}\{Y_{t_l^s}\in A_l^s\}\bigg]\cap\bigg[\bigcap_{s=1}^\xi\bigcap_{l=1}^{m_s}\{K_l^s=t_l^s\}\bigg]\,\,\bigg|\,\,\Omega^s\bigg) \\
            %=&\sum P\bigg(\bigg[\bigcap_{s=1}^\xi\bigcap_{l=1}^{m_s}\{Y_{t_l^s}\in A_l^s\}\bigg]\cap\bigg[\bigcap_{s=1}^\xi\bigcap_{l=1}^{m_s}\{S_{t_l^s}=s,\Delta^s(t_{l-1}^s,t_l^s)=0\}\bigg]\,\,\bigg|\,\,\Omega^s\bigg) \\
            =&\sum \bigg(\prod_{s=1}^\xi\prod_{l=1}^{m_s}P(Y_{t_l^s}\in A_l^s,S_{t_l^s}=s\mid \Omega^s)\bigg)P\bigg(\bigcap_{s=1}^\xi\bigcap_{l=1}^{m_s}\{\Delta^s(t_{l-1}^s,t_l^s)=0\}\,\,\bigg|\,\,\Omega^s\bigg) \\
            =&\sum \bigg(\prod_{s=1}^\xi\prod_{l=1}^{m_s}P(Y_{t_l^s}\in A_l^s\mid S_{t_l^s}=s,\Omega^s)\bigg)P\bigg(\bigcap_{s=1}^\xi\bigcap_{l=1}^{m_s}\{S_{t_l^s}=s,\Delta^s(t_{l-1}^s,t_l^s)=0\}\,\,\bigg|\,\,\Omega^s\bigg) \\
            =&\bigg(\prod_{s=1}^\xi\prod_{l=1}^{m_s}P(Y\in A_l^s\mid S=s)\bigg)\sum P\bigg(\bigcap_{s=1}^\xi\bigcap_{l=1}^{m_s}\{K_l^s=t_l^s\}\bigg)=\prod_{s=1}^\xi P(Y^s_{(1)}\in A_1^s,\ldots,Y^s_{(m_s)}\in A_{m_s}^s).
        \end{align*}
    \end{proof}

    \begin{proof}[Proof of Proposition \ref{conv}]
        (a) For any $A\in\mathcal{B}$, $\{X_{N_n}\in A\}=\cup_{m=1}^\infty(\{X_m\in A\}\cap\{N_n=m\})$; as a countable union of measurable sets, this is measurable.

        \sloppy (b) Let $\Omega'\subset\Omega$ be the subset on which $\lim_{m\rightarrow\infty}X_m=X(\omega)$ and $\Omega''\subset\Omega$ the subset on which $\lim_{n\rightarrow\infty}N_n(\omega)=\infty$. For any given $\omega\in\Omega''\cap\Omega'$, let $m_n:=N_n(\omega)$ for all $n\in\mathbb{Z}^+$. Then $\lim_{n\rightarrow\infty}m_n=\infty$ implies $\lim_{n\rightarrow\infty}X_{N_n}(\omega)=\lim_{n\rightarrow\infty}X_{m_n}(\omega)=\lim_{m\rightarrow\infty}X_{m}(\omega)=X(\omega)$; a.s. convergence follows since $P(\Omega''\cap\Omega')=1$.

        (c) For any $\epsilon>0$ and $\eta>0$, there exists some $m_0\in\mathbb{Z}^+$ for which $P(d(X,X_m)>\epsilon/3)<\eta/3$ if $m\geq m_0$. This also implies that $P(d(X_{m_0},X_m)>2\epsilon/3)<P(\{d(X_{m_0},X)>\epsilon/3\}\cup\{d(X,X_m)>\epsilon/3\})<2\eta/3$ for all $m\geq m_0$. There exists some $n_0\in\mathbb{Z}^+$ for which $P(1/N_n> 1/m_0)=P(N_n< m_0)<\eta/3$ if $n\geq n_0$. Therefore if $n\geq n_0$,
        \begin{align*}
            P(d(X,X_{N_n})>\epsilon)\leq&P\bigg(\bigg\{d(X,X_{m_0})>\frac{\epsilon}{3}\bigg\}\cup\bigg\{d(X_{m_0},X_{N_n})>\frac{2\epsilon}{3}\bigg\}\bigg) \\
            \leq&\frac{\eta}{3}+\sum_{m=m_0}^\infty P\bigg(d(X_{m_0},X_m)>\frac{2\epsilon}{3},N_n=m\bigg)\leq\frac{\eta}{3}+\frac{2\eta}{3}\sum_{m=m_0}^\infty P(N_n=m)\leq \eta.
        \end{align*}

        (d) The proof is similar. Let $A$ be a continuity set of the distribution of $X$. Then for any $\eta>0$ there exists some $m_0$ for which $\lvert P(X\in A)-P(X_m\in A)\rvert<\eta/5$ if $m\geq m_0$. This also implies that $\lvert P(X_{m_0}\in A)-P(X_m\in A)\rvert\leq \lvert P(X_{m_0}\in A)-P(X\in A)\rvert+\lvert P(X\in A)-P(X_m\in A)\rvert <2\eta/5$ for all $m\geq m_0$. There exists some $n_0\in\mathbb{Z}^+$ for which $P(N_n< m_0)<\eta/5$ if $n\geq n_0$. Therefore if $n\geq n_0$,
        \begin{align*}
            &\lvert P(X\in A)-P(X_{N_n}\in A)\rvert\leq\lvert P(X\in A)-P(X_{m_0}\in A)\rvert+\lvert P(X_{m_0}\in A)-P(X_{N_n}\in A)\rvert \\
            <&\frac{\eta}{5}+\Big\lvert P(X_{m_0}\in A,N_n<m_0)-P(X_{N_n}\in A,N_n<m_0) \\
            &\quad+\sum_{m=m_0}^\infty P(X_{m_0}\in A,N_n=m)-P(X_m\in A,N_n=m)\Big\rvert\\
            <&\frac{\eta}{5}+\frac{\eta}{5}+\frac{\eta}{5}+\sum_{m=m_0}^\infty\lvert P(X_{m_0}\in A)-P(X_m\in A)\rvert P(N_n=m)<\frac{3\eta}{5}+\frac{2\eta}{5}\sum_{m=m_0}^\infty P(N_n=m)\leq \eta.
        \end{align*}
    \end{proof}

    \begin{proof}[Proof of Proposition \ref{cons}]
    Applying Propositions \ref{iid}, \ref{conv}(b) and (c), and the strong law of large numbers, $\lim_{n\rightarrow\infty}(1/N^s_n)\sum_{j=1}^{N^s_n}Y_{(j)}^s\overset{a.s.}{=} E(Y \mid S=s)$. Since a.s. convergence is preserved by products and sums, $\lim_{n\rightarrow\infty}\hat\mu_n=\sum_{s=1}^\xi P(S=s)E(Y\mid S=s)=E(E(Y\mid S))=E(Y)$ by the same mode of convergence by which each $\hat\lambda_n^s$ converges.
    \end{proof}

    \begin{proof}[Proof of Proposition \ref{clt}]

(a) Fix $s\in\Xi$. Define $a(m,n):=\min\{m,\lfloor nf_s\rfloor\}$ and $b(m,n):=\max\{m,\lfloor nf_s\rfloor\}$ for all $m,n\in\mathbb{Z}^+$. Then define
\begin{align*}
    X_{m,n}:=&m^{1/2}(\hat\mu_{m}^s-\mu^s)-\lfloor nf_s\rfloor^{1/2}(\hat\mu_{\lfloor nf_s\rfloor}^s-\mu^s) \\
    =&\bigg(\frac{1}{a^{1/2}(m,n)}-\frac{1}{b^{1/2}(m,n)}\bigg)\sum_{j=1}^{a(m,n)}(Y_{(j)}^s-\mu^s)+\frac{1}{b^{1/2}(m,n)}\sum_{j=a(m,n)+1}^{b(m,n)}(Y_{(j)}^s-\mu^s),
\end{align*}
where $m$ may be fixed or random. Then thanks to Proposition \ref{iid}(b), $E(X_{N^s_n,n}\mid N^s_n=m)=E(X_{m,n}\mid N^s_n=m)=E(X_{m,n})=0$, so $E(X_{N^s_n,n})=E[E(X_{N^s_n,n}\mid N^s_n)]=0$. Therefore
\begin{align*}
    &E(X_{N^s_n,n}X_{N^s_n,n}^T)=\text{Cov}(X_{N^s_n,n}) \\
    =&E[\text{Cov}(X_{N^s_n,m}\mid N^s_n)]+\text{Cov}[E(X_{N^s_n,m}\mid N^s_n)]=E[\text{Cov}(X_{N^s_n,m}\mid N^s_n)] \\
    =&\sum_{m=0}^\infty P(N^s_n=m)\bigg[\bigg(\frac{1}{a^{1/2}(m,n)}-\frac{1}{b^{1/2}(m,n)}\bigg)^2a(m,n)+\frac{1}{b(m,n)}(b(m,n)-a(m,n))\bigg]\Sigma^s \\
    =&\sum_{m=0}^\infty P(N^s_n=m)\bigg(2-2\frac{a^{1/2}(m,n)}{b^{1/2}(m,n)}\bigg)\Sigma^s.
\end{align*}
Because $\lim_{n\rightarrow\infty}N^s_n/\lfloor nf_s\rfloor\overset{p}{=}1$ implies $\lim_{n\rightarrow\infty}a^{1/2}(N^s_n,n)/b^{1/2}(N^s_n,n)\overset{p}{=}1$, for every $\epsilon>0$ there exists some $n_0\in\mathbb{Z}^+$ for which $n\geq n_0$ implies $P(1-a^{1/2}(N^s_n,n)/b^{1/2}(N^s_n,n)>\epsilon/4)<\epsilon/4$. Defining $B_n:=\{m\in\mathbb{Z}^+:1-a^{1/2}(m,n)/b^{1/2}(m,n)>\epsilon/4\}$ and $C_n:=\{m\in\mathbb{Z}^+:1-a^{1/2}(m,n)/b^{1/2}(m,n)\leq\epsilon/4\}$, if $n\geq n_0$,
\begin{align*}
    &\sum_{m\in B_n}P(N^s_n=m)\bigg(2-2\frac{a^{1/2}(m,n)}{b^{1/2}(m,n)}\bigg)+\sum_{m\in C_n}P(N^s_n=m)\bigg(2-2\frac{a^{1/2}(m,n)}{b^{1/2}(m,n)}\bigg) \\
    \leq&2P(1-a^{1/2}(N^s_n,n)/b^{1/2}(N^s_n,n)>\epsilon/4)+\sum_{m\in C_n}P(N^s_n=m)\frac{2\epsilon}{4}<\epsilon.
\end{align*}
Therefore $E(\lVert X_{N^s_n,n}\rVert^2)=\text{tr}(E(X_{N^s_n,n}X_{N^s_n,n}^T))\rightarrow 0$, and hence $X_{N^s_n,n}\overset{p}\rightarrow 0$, as $n\rightarrow\infty$.

    Define $\{Z_s\}_{s\in\Xi}$ to be independent with $Z_s\sim N(0,\Sigma^s)$ ($s\in \Xi)$. By Proposition \ref{iid}(c) and Proposition \ref{conv}(d), as $n\rightarrow\infty$,
    \begin{align*}
        &\big(\lfloor nf_1\rfloor^{1/2}[\hat\mu_{\lfloor nf_1\rfloor}^1-\mu^1],\ldots,\lfloor nf_\xi\rfloor^{1/2}[\hat\mu_{\lfloor nf_\xi\rfloor}^\xi-\mu^\xi]\big)\rightsquigarrow(Z_1,\ldots,Z_\xi),
    \end{align*}
    With the conclusion of the above paragraph and Slutsky's theorem, this implies
    \begin{align}\label{law}
        &\big((N^1_n)^{1/2}[\hat\mu_{N^1_n}^1-\mu^1],\ldots,(N^\xi_n)^{1/2}[\hat\mu_{N^\xi_n}^\xi-\mu^\xi]\big)\rightsquigarrow(Z_1,\ldots,Z_\xi),
    \end{align}
    
    By assumption, $\lim_{n\rightarrow\infty}(\hat\lambda^1_n(n/N^1_n)^{1/2},\ldots,\hat\lambda^\xi_n(nN^\xi_n)^{1/2})\overset{p}{=}(P(S=1)/(f_1)^{1/2},\ldots,P(S=\xi)/(f_\xi)^{1.2})$ as $n\rightarrow\infty$. With (\ref{law}) and Slutsky's theorem, this means
    \begin{align}\label{step1}
        n^{1/2}\bigg(\hat\mu_n-\sum_{s=1}^\xi\hat\lambda^ s_n\mu^s\bigg)\rightsquigarrow\sum_{s=1}^\xi\frac{P(S=1)}{(f_s)^{1/2}}Z_s\sim N\bigg(0,\sum_{s=1}^\xi \frac{P(S=s)^2}{f_s}\Sigma^s\bigg)
    \end{align}
    as $n\rightarrow\infty$. Since $E(Y)=E[E(Y\mid S)]=\sum_{s=1}^\xi P(S=s)\mu^s$, as $n\rightarrow\infty$,
    \begin{align}\label{step2}
        n^{1/2}\big(\hat\mu_n-E(Y)\big)-n^{1/2}\bigg(\hat\mu_n-\sum_{s=1}^\xi\hat\lambda^ s_n\mu^s\bigg)= n^{1/2}\sum_{s=1}^\xi(\hat\lambda^s_n-P(S=s))\mu^s\overset{p}{\rightarrow}0
    \end{align}
since $\hat\lambda^s_n-P(S=s)=o_p(1/\sqrt{n})$ as $n\rightarrow\infty$. The result follows from (\ref{step1}), (\ref{step2}) and Slutsky's theorem.
    
    (b) For all $s\in\Xi$, as $n\rightarrow\infty$, $\hat\lambda^s_n\rightarrow P(S=s)$, $n/N^s_n\rightarrow 1/f_s$, and by Proposition \ref{conv}(c), $\hat\Sigma^s_n\overset{p}{\rightarrow}\Sigma^s$; Proposition \ref{iid}(b), Assumption \ref{assumption} and the consistency of sample covariance matrices of iid data imply that the conditions of Proposition \ref{conv}(c) are met. Thus $\lim_{n\rightarrow\infty}\sum_{s=1}^\xi[n(\hat\lambda_n^s)^2/N^s_n]\hat\Sigma^s_n\overset{p}{=}\sum_{s=1}^\xi [P(S=s)^2/f_s]\Sigma^s$, and the result follows from (a).
    \end{proof}

 \bibliographystyle{apalike}
%    Insert the bibliography data here.
\bibliography{references}

\end{document}